\def\NZQ{\mathbb}
\def\KK{{\NZQ K}}
\def\PP{{\NZQ P}}
\def\TT{{\NZQ T}}
\newcommand{\A}{\mathcal{A}}
\newcommand{\B}{\mathcal{B}}
\newcommand{\C}{\mathbb{C}}
\newcommand{\K}{\mathbb{K}}
\newcommand{\Q}{\mathbb{Q}}
\newcommand{\R}{\mathbb{R}}
\newcommand{\T}{\mathbb{T}}
\newcommand{\V}{\mathcal{V}}
\newcommand{\Z}{\mathbb{Z}}
\newcommand{\rfl}{\mathrm{Rfl}}
\newcommand{\cf}{\textrm{cf.}}
\newcommand{\Matrix}[9]{\begin{pmatrix}#1&#4&#7\\#2&#5&#8\\#3&#6&#9\end{pmatrix}}
\newcommand{\abs}[1]{\left\lvert#1\right\rvert} 
\theoremstyle{definition}
\newtheorem{thm}{Theorem}
\newtheorem{prop}[thm]{Proposition}
\newtheorem{coro}[thm]{Corollary}
\newtheorem{exam}[thm]{Example}
\newtheorem{rema}[thm]{Remark} 
\title{Non-very generic arrangements in low dimension}
\author{Takuya Saito}
\author{Simona Settepanella}
\address[Saito]{Department of Mathematics, Hokkaido University, Japan.}
\address[Saito, Settepanella]{Department of Statistics and Economics, Torino University, Italy}
\email[Saito]{saito.takuya.p6@elms.hokudai.ac.jp}
\email[Settepanella]{simona.settepanella@unito.it}
\subjclass[2020]{Primary 52C35; Secondary 05B35, 14M15.}
\keywords{Hyperplane arrangements, intersection lattice, discriminantal arrangements, permutation groups.}
\thanks{The named first author was supported by JSPS KAKENHI Grant Number JP23KJ0031.}
\begin{document}

 \maketitle

\begin{abstract}
The discriminantal arrangement $\B(n,k,\A)$ has been introduced by Manin and Schectman in $1989$ and it consists of all non-generic translates of a generic arrangement $\A$ of $n$ hyperplanes in a $k$-dimensional space. It is known that its combinatorics depends on the original arrangement $\A$ which, following Bayer and Brandt \cite{BB97}, is called \textit{very generic} if the intersection lattice of the induced discriminantal arrangement has maximum cardinality, \textit{non-very generic} otherwise. While a complete description of the combinatorics of $\B(n,k,\A)$ when $\A$ is very generic is known (see \cite{Ath99}), very few is known in the non-very generic case. Even to provide examples of non very generic arrangements proved to be a non-trivial task (see \cite{SSc}). 
In this paper, we characterize, classify and provide examples of non-very generic arrangements in low dimension.
\end{abstract}

\section{Introduction}
\noindent
The discriminantal arrangement $\B(n,k,\A)$, $\A$ a generic arrangement of $n$ hyperplanes in a $k$-dimensional space, has been introduced by Manin and Schectman (see \cite{MS89}) as a generalization of the well known braid arrangement with which it coincides when $k=1$. Analogously to the braid arrangement which can be regarded as the complement of the configuration space of $n$ distinct points in a line, its higher dimensional generalization, the discriminantal arrangement, can be defined as the complement of the configuration space of $n$ hyperplanes in a $k$-dimensional space such that any $k$ of them are in general position\footnote{In this case, the generalization of any two points be distinct in the line is $k$ by $k$ hyperplanes be in general position in the $k$-space. Different generalizations are also possible.}.\\
The combinatorics of the discriminantal arrangement $\B(n,k,\A)$ depends on the original arrangement $\A$ if $k > 1$ (see, among others, \cite{Fa94}) and the arrangement $\A$ is called (see \cite{BB97}) \textit{very generic} if the intersection lattice of $\B(n,k,\A)$ has maximum cardinality, \textit{non-very generic} otherwise. The space of very generic arrangements is an open Zariski set $\mathcal{Z}$ (see, among others, \cite{MS89}), but, beside this, very few is known on its characterization and to establish whether an arrangement $\A$ belongs or not to $\mathcal{Z}$ proved to be a quite difficult task (see \cite{SSc} for more details on this).\\ 
It is worthy to mention that in 1985, that is few years before Manin and Schectman, Crapo already defined an object equivalent to the discriminantal arrangement that he called \textit{geometry of circuits} (see \cite{Cr85}). The first reference to Crapo's work in the literature on the discriminantal arrangement is due to Athanasiadis (see \cite{Ath99}). In his paper Crapo presented an example of an arrangement of $6$ hyperplanes in the real plane which is, in fact, the first example of a non-very generic arrangement.
The first half of the results presented in this paper are based on this Crapo's non-very generic example (for a preliminary discussion about it see also \cite{DasPaSe21} ).
More recent results on non-very generic arrangements are in \cite{LiSe16},\cite{SaSeYa17},\cite{SaSeYa19},\cite{SeYa21},\cite{DasPaSe21}. In particular the second part of this paper completes the discussion on the case $k=3,n=6$ started by Falk in \cite{Fa94} and continued in \cite{SaSeYa19}. 
Other studies related to the combinatorics of the discriminantal arrangement include higher Bruhat orders, fiber zonotopes (cf. \cite{FeZi01}), arrangements generated by points in general position (cf. \cite{KNT12}\cite{NuTa12}), circuits of representable matroids (cf. \cite{CFW21}\cite{OxWa19}) and, more recently, applications to physics and statistics (cf. \cite{Stumal}).\\
The content of this paper is as follows. Section \ref{Sec:Pre} contains the preliminaries on the discriminantal arrangement and the definition of non-very generic intersections. In Section \ref{Sec:LowNVGP} we characterize the rank $3$ and $4$ non-very generic intersections of $\B(n,2,\A)$ by means of the Ceva's Theorem and the involutions on a projective line.
In Section \ref{Sec:Per} we classify the non-very generic intersections in $\B(6,2,\A)$ and $\B(6,3,\A)$ and provide a complete classification for the combinatorics of $\B(6,3,\A)$ over a characteristic $0$ commutative field.
Finally in Section \ref{Sec:Dodeca} we provide an example of a real arrangement $\A$ such that $\B(6,3,\A)$ admits $10$ non-very generic intersections in rank $3$.

\section*{Acknowledgment}
This work was supported by JSPS KAKENHI Grant Number JP23KJ0031.

\section{Preliminaries}\label{Sec:Pre}

\subsection{Discriminantal arrangement}Let $\A^0 = \{ H_1^0, \dots, H_n^0 \}$ be a central arrangement in $\K^k$ ($\K$ is a commutative field), $k<n$ such that any $m$ hyperplanes intersect in codimension $m$ at any point except for the origin for any $m \leq k$. We will call such an arrangement a generic arrangement. The space $\mathbb{S}[\A^0]$ (or simply $\mathbb{S}$ when dependence on $\A^0$ is clear or not essential) will denote the space of parallel translates of $\A^0$, that is the space of the arrangements $\A^t= \{ H_1^{x_1}, \dots, H_n^{x_n} \}$, $t = (x_1, \dots, x_n) \in \KK^n$, $H_i^{x_i} = H_i^0 + \alpha_i x_i$, $\alpha_i$ a vector normal to $H_i^0$. There is a natural identification of $\mathbb{S}$ with the $n$-dimensional affine space $\KK^n$ such that the arrangement $\A^0$ corresponds to the origin. In particular, an ordering of hyperplanes in $\A^0$ determines the coordinate system in $\mathbb{S}$ (see \cite{LiSe16}). \\
The closed subset of $\mathbb{S}$ formed by the translates of $\A^0$ which fail to form a generic arrangement is a union of hyperplanes $D_L \subset \mathbb{S}$ (see \cite{MS89}). Each hyperplane $D_L$ corresponds to a subset $L = \{ i_1, \dots, i_{k+1} \} \subset [n]=\{ 1, \dots, n \}$ and it consists of $n$-tuples of translates of hyperplanes $H_1^0, \dots, H_n^0$ in which translates of $H_{i_1}^0, \dots, H_{i_{k+1}}^0$ fail to form a generic arrangement. The arrangement $\B(n, k, \A)$ of hyperplanes $D_L$ is called $discriminantal$ $arrangement$ and has been introduced by Manin and Schechtman in \cite{MS89}.\\
It is well known (see, among others \cite{Cr85},\cite{MS89}) that there exists an open Zariski set $\mathcal{Z}$ in the space of generic arrangements of $n$ hyperplanes in $\KK^k$, such that the intersection lattice of the discriminantal arrangement $\mathcal{B}(n,k,\A)$ is independent from the choice of the arrangement $\A \in  \mathcal{Z}$. Accordingly to Bayer and Brandt (see \cite{BB97}) we will call the arrangements $\A \in  \mathcal{Z}$ \textit{very generic} and \textit{non-very generic} the others. 

\subsection{Non-very generic intersections}\label{subse:non-verygint} According to \cite{Ath99} if $\A$ is a very generic arrangement, then the intersection lattice of the discriminantal arrangement $\mathcal{B}(n,k,\A)$ is isomorphic to the collection of all sets $\{S_1, \ldots, S_m\}$, $S_i$ $\subset$ $[n]=\{1,\ldots,n\}$, $\abs{S_i} \geq k+1$, such that
\begin{equation}\label{eq:vgcon}
\abs{\bigcup_{i \in I} S_i} > k + \sum_{i \in I}(\abs{S_i}- k) \mbox{ for all } I \subset [m]=\{1,\ldots,m\}, \mid I \mid \geq 2 \quad .
\end{equation}
 The isomorphism is the natural one which associate to the set $S_i$ the space $D_{S_i}=\bigcap_{L \subset S_i, \mid L \mid=k+1} D_L, D_L \in \mathcal{B}(n,k,\A)$ of all translated  of $\A$ having hyperplanes indexed in $S_i$ intersecting in a not empty space. In particular $\{S_1, \ldots, S_m\}$ will correspond to the intersection $\bigcap_{i=1}^m D_{S_i}$. \\
The Athanasiadis's condition is necessary but not sufficient for an arrangement to be very generic (see \cite{SeYa21}), hence we will call \textit{non-very generic} any intersection $X$ of hyperplanes in $\mathcal{B}(n,k,\A), \A$ non-very generic arrangement, which is not combinatorially isomorphic to an intersection in $\mathcal{B}(n,k,\A'), \A'$ very generic. In particular any intersection $X=\bigcap_{i=1}^m D_{S_i}$ such that the set $\{S_1, \ldots, S_m\}$ does not satisfy the condition (\ref{eq:vgcon}), is \textit{non-very generic}.\\
In this paper we are particularly interested in the intersections of the form: 
$$X=\bigcap_{i=1}^r D_{L_i}, |L_i| = k+1 \mbox{ and } \bigcap_{i \in I}D_{L_i} \neq D_S, \mid S \mid > k+1 \mbox{ for any } I \subset [r], \mid I \mid \geq 2\quad,$$ which we will call \textit{simple} accordingly to \cite{SSc}. If we call \textit{multiplicity} of the simple intersection $X$ the number $r$ of the hyperplanes intersecting in $X$, an immediate consequence of the equation (\ref{eq:vgcon}) is the following result (see also \cite{SSc}).
\begin{prop}\label{pro:main}A simple intersection of rank strictly less than its multiplicity is non-very generic.
\end{prop}

\subsection{$\mathbf{K_{\TT}}$-translated}\label{sub:KT}
Fixed a set $\TT = \{ L_1, \dots, L_r \}$ of subsets $L_i \subset [n]$ of cardinality $k+1$ and an arrangement $\A=\{H_1,\ldots, H_n\}$ translated of $\A^0$, the intersection $P_i = \bigcap_{p \in L_i} H_p$ is a point if and only if $\A \in D_{L_i}$, it is empty otherwise.
Following \cite{SSc} we will call the set $\TT$ an $r$-\textbf{set}\footnote{{Notice that the original definition required the additional condition $L_i \cap L_j \neq \emptyset$ which we removed.}} if
 \begin{equation*}\label{eq:proper1}
 \bigcup_{i=1}^r L_i = \bigcup_{i \in I} L_i
 \end{equation*}
for any subset $I \subset [r], |I| =r-1$ and any two indices $1 \leq i < j \leq r$.
Given an $r$-set $\TT= \{ L_1, \dots, L_r \}$, a translated arrangement $\A$ of $\A^0$ will be called a $\mathbf{K_{\TT}}$-\textbf{translated} if $P_i$ is a point which belongs exactly to the $k+1$ hyperplanes indexed in $L_i$ for any $i=1,\ldots ,r$.
We will denote by $\A^{t(\T)}$ such a translate.

\section{Low rank non-very generic intersections in $\B(n,2,\A)$}\label{Sec:LowNVGP}
\noindent

In this section we will provide an algebraic way to fully characterize the non-very generic intersections in rank $3$ and $4$ in $\B(n,2,\A)$ by means of the Ceva's Theorem.

\subsection{Non-very generic intersections in rank $3$. } In \cite{Cr85} Crapo proved that an arrangement $\A$ of $6$ lines in the real plane is non-very generic if and only if it admits a translated which is combinatorially equivalent to the arrangement depicted in Figure \ref{fig;Ceva1}. We will call  such a configuration of lines \textit{Crapo's configuration}.\\
In other terms, the Crapo's configuration is a $K_{\TT}$-translated $\A^{t(\T)}$ of $\A$ such that $\A^{t(\T)}$ belongs to the simple intersection $X=\bigcap_{i=1}^4 D_{L_i}, L_i \in \T$ of multiplicity $4$ in rank $3$. In this case the rank of $X$ can be easily obtained since the only element in rank $4$ in the intersection lattice of $\B(6,2,\A)$ is the intersection $D_{[6]}$ which elements correspond to all the translated of $\A$ which are central arrangements.  By Proposition \ref{pro:main}. $X$ is a non-very generic intersection. We will call \textit{quadral point} any simple intersection of rank $3$ and multiplicity $4$.\\
In all this subsection the set $\TT$ will always denote a $4$-set of the form $\TT=\{L_1,\ldots ,L_4\}$ with $L_1=\{p_1,p_2,p_3\},L_2=\{p_1,p_5,p_6\},L_3=\{p_2,p_4,p_6\},L_4=\{p_3,p_4,p_5\}$ fixed,
unless differently specified.\\
Finally, since the discriminantal arrangement only depends on its trace at infinity $\A_\infty$, we will consider indifferently either the generic arrangement $\A$ in $\K^2$ or its trace at infinity $\A_\infty$ in $\PP(\K^2)$ .\\

\begin{figure}
\begin{tabular}{cc}
\begin{minipage}{0.45\linewidth}
\centering
	\begin{tikzpicture}[scale=2.5,  xscale=0.6, rotate=50, xscale=0.8 ]
		\draw (-0.2,0) node[at={(-0.4,0)}] {$H_{p_1}$}--(1.7,0);
		\draw (0,-0.2) node[at={(0,-0.4)}] {$H_{p_6}$}--(0,1.7);
		\draw (1.7,-0.2) node[at={(1.8,-0.4)}] {$H_{p_2}$}--(-0.2,1.7);
		\draw (1.7,-0.1) node[at={(-0.8,1.2)}] {$H_{p_3}$}--(-0.7,1.1);
		\draw (-0.7,0.8) node[at={(-0.8,0.8)}] {$H_{p_4}$}--(0.2,1.7);
		\draw (0.1,-0.2) node[at={(0.3,-0.3)}] {$H_{p_5}$}--(-0.7,1.4);
	\end{tikzpicture}
\caption{The Crapo configulation}\label{fig;Ceva1} 
\end{minipage}&
\begin{minipage}{0.45\linewidth}
\centering
	\begin{tikzpicture}[scale=2.5, rotate=90, xscale=0.6, rotate=50, xscale=0.8 ]
		\node[inner sep=0.1em, fill=black!100, circle] (a) at (0,0){};
		\node[inner sep=0.1em, fill=black!100, circle] (b) at (1.5,0){};
		\node[inner sep=0.1em, fill=black!100, circle] (c) at (0,1.5){};
		\node[inner sep=0.1em, fill=black!100, circle] (d) at (-0.5,1){};
		
		\draw (-0.2,0) --(1.7,0);
		\draw (0,-0.2) --(0,1.7);
		\draw (1.7,-0.2) --(-0.2,1.7);
		\draw (1.7,-0.1) --(-0.7,1.1);
		\draw (-0.7,0.8) --(0.2,1.7);
		\draw (0.1,-0.2) --(-0.7,1.4);

		\draw[line width=0.8pt, arrows=-stealth] (b) to node[auto=left] {$c_1\alpha_{p_1}$} (a);
		\draw[line width=0.8pt, arrows=stealth-] (c) to node[auto=left] {$c_2\alpha_{p_2}$} (b);
		\draw[line width=0.8pt, arrows=-stealth] (d) to node[auto=left] {$c_4\alpha_{p_4}$} (c);
		\draw[line width=0.8pt, arrows=-stealth] (a) to node[auto=left] {$c_5\alpha_{p_5}$} (d);
		\draw[line width=0.8pt, arrows=stealth-] (a) to node[auto=left', pos=0.3] {$c_6\alpha_{p_6}$} (c);
		\draw[line width=0.8pt, arrows=stealth-] (d) to node[auto=left'] {$c_3\alpha_{p_3}$} (b);
	\end{tikzpicture}
\caption{The Crapo configulation spanned by normals}\label{fig:Ceva2} 
\end{minipage}
\end{tabular}
\end{figure}
\noindent
The following theorem shows that the Ceva's Theorem provides a full characterization of the Crapo's configuration which can also be characterized by means of involutions,  i.e. projective transformations $f$ which satisfy $f^2=\mathrm{id}$, on the infinity line. From now on, given a family of vectors $v_i$'s, we will denote by $[v_1,v_2;v_3,v_4]$ the cross ratio $|v_1 v_3| |v_2 v_4|/|v_2 v_3| |v_1 v_4|$, where the symbol $|v_i v_j|$ stands for the determinant of the $2\times 2$ matrix consisting of the column vectors $v_i,v_j$.

\begin{thm}[The Ceva's Theorem]\label{thm;Ceva} Let $\TT$ be a $4$-set, the following statements are equivalent:
\begin{enumerate}
\item[i)] the arrangement $\A$ of $6$ lines in $\K^2$ admits a $K_{\TT}$-translated $\A^{t(\T)}$;
\item[ii)] the Ceva's equation 
\begin{equation}\label{eq;Ceva}
	\frac{|\alpha_{p_1}\alpha_{p_5}||\alpha_{p_2}\alpha_{p_6}||\alpha_{p_3}\alpha_{p_4}|}{|\alpha_{p_1}\alpha_{p_6}||\alpha_{p_2}\alpha_{p_4}||\alpha_{p_3}\alpha_{p_5}|}=1
\end{equation}
is satisfied;
\item[iii)] there is an involution $f$ on the infinity line which satisfies $$f(H_{p_j})=H_{p_{j+3}}, j \in \Z_6 \quad .$$
\end{enumerate}
\begin{proof}
For simplicity, we write $[\alpha_{p_1},\alpha_{p_2},\alpha_{p_3};\alpha_{p_4},\alpha_{p_5},\alpha_{p_6}]$ instead of
$$\left(|\alpha_{p_1}\alpha_{p_5}||\alpha_{p_2}\alpha_{p_6}||\alpha_{p_3}\alpha_{p_4}|\right) / \left(|\alpha_{p_1}\alpha_{p_6}||\alpha_{p_2}\alpha_{p_4}||\alpha_{p_3}\alpha_{p_5}|\right).$$
There is a canonical isomorphism between $\K^2$ and $(\K^2)^\ast$ which induces an isomorphism between $\mathbb{P}(\K^2)$ and $\mathbb{P}((\K^2)^\ast)$.
Since the vectors $\alpha_p \in(\K^2)^\ast$ are normal to the lines $H_p$, we can study the line arrangement $\A'=\left\{\langle\alpha_p\rangle\right\}_{p=1}^6$,in the dual space $(\K^2)^\ast$,  which trace at infinity $\A'_\infty$ is projectively isomorphic to $\A_\infty$.
\par
\noindent
i) $\Rightarrow$ ii) 
The translate of arrangement $\A'$ (see Figure \ref{fig:Ceva2}) is a Crapo's configuration if and only if there are $c_1,\ldots, c_6\in\K$\footnote{Notice that Figure 2 is obtained from Figure 1 by a $\pi/2$ rotation. This fact guarantees the existence of $c_1, c_2, c_3, c_4, c_5, c_6 $ satisfying the Equation \eqref{eq;Ceva2}. In the case of general fields, the existence of $c_1, c_2, c_3, c_4, c_5, c_6 $ follows from the natural isomorphism between $\K^2$ and $(\K^2)^\ast$.}
 satisfying the equation \begin{equation}\label{eq;Ceva2}
c_4\alpha_{p_4}=c_2\alpha_{p_2}-c_3\alpha_{p_3}, \quad c_5\alpha_{p_5}=c_3\alpha_{p_3}-c_1\alpha_{p_1}, \quad c_6\alpha_{p_6}=c_1\alpha_{p_1}-c_2\alpha_{p_2},
\end{equation}
that is
\begin{align*}
		&[\alpha_{p_1},\alpha_{p_2},\alpha_{p_3};\alpha_{p_4},\alpha_{p_5},\alpha_{p_6}]=
		[c_1\alpha_{p_1},c_2\alpha_{p_2},c_3\alpha_{p_3}; c_4\alpha_{p_4},c_5\alpha_{p_5},c_6\alpha_{p_6}]\\
		=&[c_1\alpha_{p_1},c_2\alpha_{p_2},c_3\alpha_{p_3}; c_2\alpha_{p_2}-c_3\alpha_{p_3}, c_3\alpha_3{p_3}-c_1\alpha_{p_1},c_1\alpha_{p_1}-c_2\alpha_{p_2}] \\
		=&\frac{|\alpha_{p_1}\alpha_{p_3}||\alpha_{p_2}\alpha_{p_1}||\alpha_{p_3}\alpha_{p_2}|}{(-1)^3|\alpha_{p_1}\alpha_{p_2}||\alpha_{p_2}\alpha_{p_3}||\alpha_{p_3}\alpha_{p_1}|} =1.\\
\end{align*}
\noindent
ii) $\Rightarrow$ i) 
Conversely, assume that the equation \eqref{eq;Ceva} is satisfied.
The combinatorics of discriminantal arrangement are invariant under projective transformations for the original arrangements. Thus, we can assume six vectors are $\begin{pmatrix}
      1 & 0 & 1 & \tilde\lambda_{14} & \tilde\lambda_{15} & \tilde\lambda_{16}\\
      0 & 1 & 1 &\tilde\lambda_{24} & \tilde\lambda_{25} & \tilde\lambda_{26}\\
   \end{pmatrix}$.  
But since we also have the flexibility to dilate the six normal vectors in the plane individually by various nonzero constants we can actually assume that 
\begin{equation}\label{eq:coord}
\left(\alpha_{p_1}\alpha_{p_2}\alpha_{p_3}\alpha_{p_4}\alpha_{p_5}\alpha_{p_6}\right)=
   \begin{pmatrix}
      1 & 0 & 1 & \lambda_4 & \lambda_5 & \lambda_6\\
      0 & 1 & 1 & 1 & 1 & 1\\
   \end{pmatrix}
\end{equation}
where $\lambda_4,\lambda_5,\lambda_6\in \K\setminus\{0,1\}$ and $\lambda_4\neq\lambda_5\neq\lambda_6\neq\lambda_4$. Since the Ceva's equation \eqref{eq;Ceva} is satisfied then
\begin{equation}\label{eq:Cevaequiv}
1=[\alpha_{p_1},\alpha_{p_2},\alpha_{p_3};\alpha_{p_4},\alpha_{p_5},\alpha_{p_6}]=\frac{1\cdot (-\lambda_6)\cdot (1-\lambda_4)}{1\cdot (-\lambda_4)\cdot (1-\lambda_5)},
\end{equation} 
that is $(1-\lambda_5)\lambda_4=(1-\lambda_4)\lambda_6$ and if we write $c_1=(1-\lambda_5)\lambda_4,c_2=\lambda_4-1,c_3=\lambda_4$, we get
	\begin{equation*}
		c_1\alpha_{p_1}-c_2\alpha_{p_2}=(1-\lambda_4)\alpha_{p_6}; \quad
		c_2\alpha_{p_2}-c_3\alpha_{p_3}= -\alpha_{p_4};  \quad
		c_3\alpha_{p_3}-c_1\alpha_{p_1}= \lambda_4\alpha_{p_5} \quad .
	\end{equation*}
The equation \eqref{eq;Ceva2} is satisfied and the proof is completed.\\
iii) $\Leftrightarrow$ ii) We are going to prove that, with the choice of coordinates in equation \eqref{eq:coord} then the statement in iii) is equivalent to the equation \eqref{eq:Cevaequiv} which concludes the proof.\\
Let $f$ be a projective involution on the infinity line which satisfies $f(H_{p_j})=H_{p_{j+3}}, j \in \Z_6$, then the following equalities hold:
\begin{alignat*}{5}
[\alpha_{p_1},\alpha_{p_2};\alpha_{p_3},\alpha_{i}]&=&[f(\alpha_{p_1}),f(\alpha_{p_2});f(\alpha_{p_3}),f(\alpha_{i})]&=&[\alpha_{p_4},\alpha_{p_5};\alpha_{p_6},\alpha_{j}]\\
\end{alignat*}
for any couple $(i,j) \in \{(p_4,p_1),(p_5,p_2),(p_6,p_3)\}$, {where the action of $f$ on $(\mathbb{K}^2)^\ast$ is the induced action, that is, $f(\alpha)=\alpha\circ f^{-1}$ for $\alpha\in (\mathbb{K}^2)^\ast$.}
\par
By algebraic computations, with the choice of coordinates in equation \eqref{eq:coord} we get the equations:
$$[\alpha_{p_1},\alpha_{p_2};\alpha_{p_3},\alpha_{p_4}]=\lambda_4,\quad [\alpha_{p_1},\alpha_{p_2};\alpha_{p_3},\alpha_{p_5}]=\lambda_5,\quad [\alpha_{p_1},\alpha_{p_2};\alpha_{p_3},\alpha_{p_6}]=\lambda_6$$  
and, if we set $c=\frac{(\lambda_4-\lambda_6)}{(\lambda_5-\lambda_6)}$, we obtain
$$
[\alpha_{p_4},\alpha_{p_5};\alpha_{p_6},\alpha_{p_1}]=c, \quad [\alpha_{p_4},\alpha_{p_5};\alpha_{p_6},\alpha_{p_2}]=c\frac{\lambda_5}{\lambda_4}, \quad [\alpha_{p_4},\alpha_{p_5}; \alpha_{p_6},\alpha_{p_3}]=c\frac{(\lambda_5-1)}{(\lambda_4-1)} \quad .
$$
Since $f$ is an involution the following equalities hold:
\begin{equation}\label{eq;Inv}
1. \quad c=\lambda_4; \quad 2. \quad c\frac{\lambda_5}{\lambda_4}=\lambda_5; \quad 3. \quad c\frac{(\lambda_5-1)}{(\lambda_4-1)}=\lambda_6 \quad .
\end{equation}
The equation \eqref{eq;Inv} 3. can be written as $(\lambda_4-\lambda_6)(1-\lambda_5)=(\lambda_5-\lambda_6)(1-\lambda_4)\lambda_6$, the equations \eqref{eq;Inv} 1. and \eqref{eq;Inv} 2. are equivalent to \eqref{eq;Inv4} and combining \eqref{eq;Inv} 1. and \eqref{eq;Inv} 3. yields \eqref{eq;Inv5}
\begin{alignat}{2}
(\lambda_4-\lambda_6)-(\lambda_5-\lambda_6)\lambda_4=0\label{eq;Inv4}\\
(1-\lambda_5)\lambda_4=(1-\lambda_4)\lambda_6 \label{eq;Inv5} \quad .
\end{alignat}
If we replace $\lambda_6=\lambda_4(1-\lambda_5)/(1-\lambda_4)$ obtained from the equation \eqref{eq;Inv5} into the left hand side of the equation \eqref{eq;Inv4} we get $$(\lambda_4-\lambda_6)-(\lambda_5-\lambda_6)\lambda_4=\lambda_4(1-\lambda_5)-(1-\lambda_4)\lambda_6=\lambda_4(1-\lambda_5)-\lambda_4(1-\lambda_5)=0 \quad .$$ That is the equation \eqref{eq;Inv5} provides the equation \eqref{eq;Inv4} and hence the equations in \eqref{eq;Inv}. 
On the other hand the equation \eqref{eq;Inv5} is the equation \eqref{eq:Cevaequiv} and the proof follows.
{Conversely, if the condition \eqref{eq:Cevaequiv} is satisfied and we fix the representation matrix of $f$ equals to $\begin{pmatrix}-1&0\\0&\lambda_4\end{pmatrix}^{-1}\begin{pmatrix}\lambda_4&\lambda_5\\1&1\end{pmatrix}^{-1}$, then $f$ is an involution as soon as the $\lambda_i$'s satisfy the above equations.}
\end{proof}
\end{thm}
\noindent
Next corollary has been already proved by Crapo in \cite{Cr85}.
\begin{coro}Let $\A$ be a generic arrangement of $n$ lines in $\K^2$. The number of quadral points of $\B(n,2,\A)$ is either zero or even.
\begin{proof}
Let $\T=\{L_1,\ldots ,L_4\}$ be a $4$-set such that the $\alpha_{p_i}$'s indexed in $\T$ satisfy the Ceva's equation \eqref{eq;Ceva}. Then it is an easy remark that the vectors indexed in the $4$-set $\T'=\{L_1',\ldots,L_4'\},L_i'=\{i_1,\ldots,i_6\}\setminus L_i$ too satisfy the Ceva's equation \eqref{eq;Ceva}.
\end{proof}
\end{coro}
\noindent
The following corollary is a consequence of simple algebraic computations.

\begin{coro}\label{cor;CrCeva}
Let $\T$ be a $4$-set, an arrangement $\A$ admits a $K_\T$-translated which is a Crapo's configuration if and only if the equation 
\begin{equation*}\label{eq;CrCeva}
	[\alpha_{p_2},\alpha_{p_3};\alpha_{p_1},\alpha_{p_4}][\alpha_{p_3},\alpha_{p_1};\alpha_{p_2},\alpha_{p_5}][\alpha_{p_1},\alpha_{p_2};\alpha_{p_3},\alpha_{p_6}]=-1
\end{equation*}
is satisfied.
\end{coro}

\begin{exam}
Let $\A$ be the line arrangement in $\R^2$ defined by the lines normal to the vectors
\begin{alignat*}{1}
A=(\alpha_1 \ldots \alpha_6)=\begin{pmatrix}
1&1&2&0&3&3\\
-1&1&-1&1&-2&1\\ 
\end{pmatrix} \quad .
\end{alignat*}
It is an easy check that the arrangement $\A$ is generic. If we multiply $A$ by the matrix $T=\begin{pmatrix} 3&3\\ -1&-3\\ \end{pmatrix}$ we get
\begin{alignat*}{1}
TA&=
\begin{pmatrix}
3&3\\
-1&-3\\
\end{pmatrix}
\begin{pmatrix}
1&1&2&0&3&3\\
-1&1&-1&1&-2&1\\
\end{pmatrix}\\
&=\begin{pmatrix}
0&6&3&3&3&12\\
2&-4&1&-3&3&-6\\
\end{pmatrix}=\left(2\alpha_4,2\alpha_5,\alpha_6,3\alpha_1,3\alpha_2,6\alpha_3\right) \quad .
\end{alignat*}
That is the map $T$ satisfies the third condition of the Theorem \ref{thm;Ceva} and hence the arrangement $\A$ is non-very generic.
\end{exam}

\begin{exam}\label{exam;Oct}
Let's consider the octahedron on the Riemann sphere $\C\PP^1\cong S^2$; $\{\infty, 0, 1, -1,\allowbreak \sqrt{-1}, -\sqrt{-1}\}$ with the natural action of the octahedral group $(\cong S_4)$ and the matrix of vectors
\begin{equation*}
A=(\alpha_1 \ldots \alpha_6)=
\begin{pmatrix}
      1 & 0 & 1 & -1 & \sqrt{-1} & -\sqrt{-1}\\
      0 & 1 & 1 & 1 & 1 & 1\\
\end{pmatrix}  \quad .
\end{equation*}
Then the six matrices
\begin{alignat*}{6}
&\begin{pmatrix}  1 & 1 \\  1 & -1\\ \end{pmatrix},&&\begin{pmatrix}  -1 & 1 \\  1 & 1\\ \end{pmatrix},&&\begin{pmatrix}  0 & \sqrt{-1} \\  1 & 0\\ \end{pmatrix},\\
&\begin{pmatrix} 0 & -\sqrt{-1} \\  1 & 0\\ \end{pmatrix},&&\begin{pmatrix}  \sqrt{-1} & -1 \\ 1 & - \sqrt{-1} \\ \end{pmatrix},&&\begin{pmatrix}   \sqrt{-1} & 1 \\ -1 & - \sqrt{-1} \\ \end{pmatrix}
\end{alignat*}
act as six involutions with no fixed points on $A$ and if $\A_O$ is the arrangement with the lines normal to the above vectors $\alpha_1,\ldots,\alpha_6$, the discriminantal arrangement $\B(6,2,\A_O)$ has $6\times2=12$ quadral points.
\end{exam}

\subsection{Non-very generic intersections in rank $4$.}Analogously to the previous subsection, we call \textit{quintuple point} a simple intersection $X$ of multiplicity $5$ in rank $4$.
The following proposition is a consequence of the Ceva's Theorem (Theorem \ref{thm;Ceva}) and it provides a condition that yields quintuple points in $\B(n,2,\A)$.

\begin{figure}
\begin{tabular}{cc}
\begin{minipage}{0.5\linewidth}
\centering
	\begin{tikzpicture}[scale=1, xscale=1.6, rotate=50, yscale=0.8 ]
		\draw (-1.2,0) node[at={(2.5,0)}] {$H_{p_0}$}--(2.2,0);
		\draw (-0.2,-1.1) node[at={(-0.4,-1.1)}] {$H_{p_3}$}--(2.2,0.1);
		\draw (-0.2,-0.4) node[at={(1.3,2.6)}] {$H_{p_5}$}--(1.2,2.4);
		\draw (-1.2,0.2) node[at={(0.4,-1.4)}] {$H_{p_1}$}--(0.2,-1.2);
		\draw (-1.2,-0.2) node[at={(1.5,2.2)}] {$H_{p_4}$}--(1.2,2.2);
		\draw (0,-1.2) node[at={(0,-1.5)}] {$H_{p_2}$}--(0,0.3);
		\draw (0.8,2.4) node[at={(0.9,2.7)}] {$H_{p_6}$}--(2.2,-0.4);
	\end{tikzpicture}
	\caption{quintuple point in $\B(n,2,\A)$}\label{fig:Quint1} 
\end{minipage}&
\begin{minipage}{0.5\linewidth}
\centering
	\begin{tikzpicture}[scale=1, xscale=1.6, rotate=50, yscale=0.8 ]
		\draw (-1.2,0) node[at={(2.5,0)}] {$H_{p_0}$}--(2.2,0);
		\draw (-0.2,-1.1) node[at={(-0.4,-1.1)}] {$H_{p_3}$}--(2.2,0.1);
		\draw (-0.2,-0.4) node[at={(1.3,2.6)}] {$H_{p_5}$}--(1.2,2.4);
		\draw (-1.2,0.2) node[at={(0.4,-1.4)}] {$H_{p_1}$}--(0.2,-1.2);
		\draw (-1.2,-0.2) node[at={(1.5,2.2)}] {$H_{p_4}$}--(1.2,2.2);
		\draw (0,-1.2) node[at={(0,-1.5)}] {$H_{p_2}$}--(0,0.3);
		\draw (0.8,2.4) node[at={(0.9,2.7)}] {$H_{p_6}$}--(2.2,-0.4);
		\draw[dashed] (-0.1,-1.3) --node {$H'$}(1.1,2.3);
	\end{tikzpicture}
	\caption{quintuple point add $1$ line}\label{fig:Quint2} 
\end{minipage}
\end{tabular}
\end{figure}

\begin{prop}\label{thm;quint} Let $\T$ be a $5$-set defined by
$$\T=\{\{p_0,p_1,p_4\},\{p_0,p_2,p_5\},\{p_0,p_3,p_6\},\{p_1,p_2,p_3\},\{p_4,p_5,p_6\}\} \quad .$$ 
A simple intersection $X=\bigcap_{L \in \T} D_L$ of hyperplanes in $\B(n,2,\A)$ is a quintuple point if and only if the equation
\begin{equation}\label{eq:quint}
[\alpha_{p_0},\alpha_{p_1};\alpha_{p_2},\alpha_{p_3}]=[\alpha_{p_0},\alpha_{p_4};\alpha_{p_5},\alpha_{p_6}]
\end{equation}
is satisfied. 
\end{prop}
\begin{proof}
Let's assume there is a $K_\T$-translated $\A^t$, $t=t(\T)$, of $\A$ and add the line $H'$, with normal vector $\alpha'$, such that $H^t_{p_1}\cap H^t_{p_2}\cap H^t_{p_3}, H^t_{p_4}\cap H^t_{p_5}\cap H^t_{p_6}\subset H'$.
Then, the new arrangement $\A^t\cup\{H'\}$ contains the $2$ Crapo's configurations $\{H',H^t_{p_0}, H^t_{p_1},H^t_{p_2},H^t_{p_4},H^t_{p_5}\}, \{H',H^t_{p_0}, H^t_{p_1},H^t_{p_3}, H^t_{p_4}, \allowbreak  H^t_{p_6}\}$. 
The Ceva's condition in Theorem \ref{thm;Ceva} yelds
\begin{alignat}{2}
|\alpha_{p_0}\alpha_{p_2}||\alpha_{p_1}\alpha'||\alpha_{p_4}\alpha_{p_5}|/|\alpha_{p_0}\alpha_{p_5}||\alpha_{p_1}\alpha_{p_2}||\alpha_{p_4}\alpha'|&=&1,\label{eq:quint1}\\
|\alpha_{p_0}\alpha_{p_3}||\alpha_{p_1}\alpha'||\alpha_{p_4}\alpha_{p_6}|/|\alpha_{p_0}\alpha_{p_6}||\alpha_{p_1}\alpha_{p_3}||\alpha_{p_4}\alpha'|&=&1.\label{eq:quint2}
\end{alignat}
Combining \eqref{eq:quint1} and \eqref{eq:quint2} yields 
\begin{equation*}\label{eq:quint3}
\frac{|\alpha_{p_4}\alpha'|}{|\alpha_{p_1}\alpha'|}=\frac{|\alpha_{p_0}\alpha_{p_2}||\alpha_{p_4}\alpha_{p_5}|}{|\alpha_{p_0}\alpha_{p_5}||\alpha_{p_1}\alpha_{p_2}|}=\frac{|\alpha_{p_0}\alpha_{p_3}||\alpha_{p_4}\alpha_{p_6}|}{|\alpha_{p_0}\alpha_{p_6}||\alpha_{p_1}\alpha_{p_3}|}
\end{equation*}
from which we obtain $|\alpha_{p_0}\alpha_{p_2}||\alpha_{p_1}\alpha_{p_3}|/|\alpha_{p_0}\alpha_{p_3}||\alpha_{p_1}\alpha_{p_2}|= |\alpha_{p_0}\alpha_{p_5}||\alpha_{p_4}\alpha_{p_6}|/|\alpha_{p_0}\alpha_{p_6}| \allowbreak |\alpha_{p_4}\alpha_{p_5}|$
equivalent to the equation \eqref{eq:quint}. \\
Conversely, suppose that $\A$ satisfies the equation \eqref{eq:quint}.
There is an unique map $f\in PGL(2,\K)$ such that $f(\alpha_{p_1})=\alpha_{p_5}, f(\alpha_{p_5})=\alpha_{p_1}, f(\alpha_{p_2})=\alpha_{p_4}$
 and we choose the homogeneous coordinate such that $(\alpha_{p_1}\alpha_{p_5}\alpha_{p_2}\alpha_{p_4})=\begin{pmatrix}1&0&1&\lambda\\0&1&1&1\end{pmatrix}$.
 Now $f$ is represented by $\begin{pmatrix}0&\lambda \\ 1&0\end{pmatrix}$ therefore $f(\alpha_{p_4})=\alpha_{p_2}$.
 Similarly, by setting $\alpha'=f(\alpha_{p_0})$, the equation \eqref{eq:quint1} holds by $f(\alpha')=\alpha_{p_0}$ and Theorem \ref{thm;Ceva}. As in the first half of the discussion, equation \eqref{eq:quint2} follows from equations \eqref{eq:quint} and \eqref{eq:quint1}. Hence there is a $t$-translated of $\A\cup\{H'\}$ which contains the Crapo configurations $\{H'^t,H^t_{p_0}, H^t_{p_1},H^t_{p_2},H^t_{p_4},H^t_{p_5}\}, \{H'^t,H^t_{p_0}, H^t_{p_1},H^t_{p_3},H^t_{p_4}, \allowbreak H^t_{p_6}\}$  where $H'$ is an hyperplane orthogonal to $\alpha'$.
In particular, $H^t_{p_1}\cap H^t_{p_4},H^t_{p_2}\cap H^t_{p_5},H^t_{p_3}\cap H^t_{p_6}\subset H^t_{p_0}$ and $H^t_{p_1}\cap H^t_{p_2}\cap H^t_{p_3},H^t_{p_4}\cap H^t_{p_5}\cap H^t_{p_6}\neq \emptyset$.
This completes the proof.
\end{proof}
\noindent
Analogously to what happen for the quadral points in $\B(n,2,\A)$ and the non-very generic intersections in $\B(n, 3, \A)$, e.g. the Pappus's Theorem (see \cite{SaSeYa17,SaSeYa19}), there are dependencies between quintuple points too. The following Corollaries are obtained from Proposition \ref{thm;quint} by means of simple algebraic computations. 
\begin{coro} If the intersection lattice of the discriminantal arrangement $\B(n,2,\A)$ contains the two quintuple points associated to the $5$-sets:
\begin{equation*}
\begin{alignedat}{2}
\T_1&=&\{\{p_0,p_1,p_4\},\{p_0,p_2,p_5\},\{p_0,p_3,p_6\},\{p_1,p_2,p_3\},\{p_4,p_5,p_6\}\},\\
\T_2&=&\{\{p_0,p_1,p_5\},\{p_0,p_2,p_6\},\{p_0,p_3,p_4\},\{p_1,p_2,p_3\},\{p_4,p_5,p_6\}\},
\end{alignedat}
\end{equation*}
then it contains the quintuple point associated to the $5$-set:
\begin{equation*}
\T_3=\{\{p_0,p_1,p_6\},\{p_0,p_2,p_4\},\{p_0,p_3,p_5\},\{p_1,p_2,p_3\},\{p_4,p_5,p_6\}\} .
\end{equation*}
\end{coro}

\begin{coro}
If the intersection lattice of the discriminantal arrangement $\B(n,2,\A)$ contains the three quintuple points associated to the $5$-sets:
\begin{alignat*}{2}
\T_1&=&\{\{p_0,p_1,p_4\},\{p_0,p_2,p_5\},\{p_0,p_3,p_6\},\{p_1,p_2,p_3\},\{p_4,p_5,p_6\}\},\\
\T_2&=&\{\{p_0,p_1,p_4\},\{p_0,p_2,p_5\},\{p_0,p_3,p_6\},\{p_1,p_5,p_3\},\{p_4,p_2,p_6\}\},\\
\T_3&=&\{\{p_0,p_1,p_4\},\{p_0,p_2,p_5\},\{p_0,p_3,p_6\},\{p_1,p_2,p_6\},\{p_4,p_5,p_3\}\}.
\end{alignat*}
Ithen it contains the quintuple point associated to the $5$-set:
\begin{equation*}
\T_4=\{\{p_0,p_1,p_4\},\{p_0,p_2,p_5\},\{p_0,p_3,p_6\},\{p_1,p_5,p_6\},\{p_4,p_2,p_3\}\}.
\end{equation*}
\end{coro}
\subsection{Arrangements from regular polygons} For simplicity let's call \textit{quintuple point} a simple intersection of multiplicity $5$ and rank $4$. Given a line  arrangement $\A$, let's denote by $m_4(\A)$ and $m_5(\A)$ the numbers, respectively, of the quadral and quintuple points of the discriminantal arrangement $\B(n,2,\A)$.\par
Notice that from the characterization of non-very generic intersections by means of projective transformations shows that highly symmetric arrangements have many quadral or quintuple points.
If we denote by $R_n$ the central line arrangement defined by the $n$ lines parallel to the edges of the regular polygon with $2n$ sides, then the following Proposition holds. 

\begin{prop}
Let $R_n$ be a real central arrangement consisting of the $n$ lines normal to the vectors $\alpha_p=(\cos \frac{p\pi}{n}, \sin \frac{p\pi}{n})$ for $p\in[n]$, then for $n\geq 6$ we have
\begin{equation}\label{eq:quad}
\begin{split}
m_4(R_n) \geq (n+2)\binom{n/2}{3}+n\binom{n/2-1}{3} \quad \mbox{if $n$ is even} \\
m_4(R_n) \geq 2n\binom{(n-1)/2}{3}\quad  \mbox{if $n$ is odd}
\end{split}
\end{equation}
while for $n\geq 7$ we have 
\begin{equation}\label{eq:quint}
\begin{split}
m_5(R_n) \geq 4n\binom{n/2-1}{3} \quad \mbox{if $n$ is even} \\
m_5(R_n) \geq 4n\binom{(n-1)/2}{3} \quad \mbox{if $n$ is odd}.
\end{split}
\end{equation}
\end{prop}
\begin{proof}
Set $[A_n]=\{[\alpha_1],\ldots,[\alpha_n]\}\subset\PP(\R^2)$ and 
let $\rfl(\alpha,\cdot)$ be the reflection map $v\mapsto v-\left(2v\cdot \alpha/\alpha\cdot \alpha\right)\alpha$ where $v\cdot \alpha$ is the inner product of $v$ and $\alpha$.
The reflections $\rfl(\alpha_{p_1},\cdot)$ and $\rfl(\alpha_{p_1}+\alpha_{p_2},\cdot)$ are involutions acting on $[A_n]$. Indeed $\rfl(\alpha,\cdot)$ is an element of $PGL(2,\R)$ since $[\rfl(\alpha_{p_1},\alpha)],[\rfl(\alpha_{p_1}+\alpha_{p_2},\alpha)]\in [A_n]$ for any $[\alpha_{p_1}],[\alpha_{p_2}],[\alpha]\in [A_n]$ and $\rfl(v,\cdot)^2=\mathrm{id}$ for any $[v]\in\PP(\R^2)$. In particular we have that
\begin{equation*}
[\rfl(\alpha_p,\alpha_{p+q})]=[\alpha_{p-q}],\,[\rfl(\alpha_{p-1}+\alpha_p,\alpha_{p+q})]=[\alpha_{p-q-1}]
\end{equation*}
where we regard the index set $[n]$ as the cyclic group $\Z_n$.\\
Since the fixed points of $\langle[\rfl(\alpha_{p_1},\cdot)]\rangle$ are, respectively, $[\alpha_p]$ if $n$ is odd and $[\alpha_{p+n/2}]$ if $n$ is even, the orbit decompositions of $[A_n]$ by the action of $\langle[\rfl(\alpha_{p_1},\cdot)]\rangle$ are
$$\{\{\alpha_p\},\{\alpha_{p-1},\alpha_{p+1}\},\ldots,\allowbreak \{\alpha_{p-(n-1)/2},\allowbreak \alpha_{p+(n-1)/2}\}\} \quad \mbox{if n is odd}$$
and
$$\{\{\alpha_p\},\{\alpha_{p-1},\alpha_{p+1}\}, \ldots,\{\alpha_{p-n/2+1},\alpha_{p+n/2-1}\},\{\alpha_{p+n/2}\}\} \quad \mbox{if n is even} \quad.$$
The orbit decomposition of $[A_n], n$ even, by the action of $\langle\rfl(\alpha_{p-1}+\alpha_p,\cdot)\rangle$ is
$$\{\{\alpha_{p-1},\alpha_{p}\},\{\alpha_{p-2},\allowbreak \alpha_{p+1}\},\ldots,\allowbreak \{\alpha_{p-n/2},\alpha_{p+n/2-1}\}\} \quad.$$
We can prove the Proposition case by case as follows.
\begin{enumerate}
\item[\ref{eq:quad}.1] For each reflection $\rfl(\alpha_p,\cdot)$ and $\rfl(\alpha_{p-1}+\alpha_p,\cdot)$, $p=1,\ldots,n/2$, by Theorem \ref{thm;Ceva} any choice of three elements, respectively in $\{\{\alpha_{p-1},\alpha_{p+1}\},\ldots,\allowbreak \{\alpha_{p-n/2+1},\alpha_{p+n/2-1}\}\}$ and in $\{\{\alpha_{p-1},\alpha_{p}\},\{\alpha_{p-2},\alpha_{p+1}\},\ldots,\allowbreak \{\alpha_{p-n/2},\alpha_{p+n/2-1}\}\}$ gives rise to two quadral points. Moreover the matrix $\begin{pmatrix}0&-1\\ 1&0\end{pmatrix}$ provides an involution on $[A_n]$ which maps $\alpha_p$ into $\alpha_{p+n/2}$ and hence any choice of three elements in $\{\{\alpha_{1}, \alpha_{1+n/2}\},\allowbreak \{\alpha_{2}, \alpha_{2+n/2}\},\ldots, \{\alpha_{n/2}, \alpha_{n}\}\}$ gives rise to two more quadral points.
Thus we obtain exactly $\frac{n}{2}\binom{n/2}{3}\times2+\frac{n}{2}\binom{n/2-1}{3}\times2+\binom{n/2}{3}\times2$ quadral points.
\item[\ref{eq:quad}.2] Analogously to [\ref{eq:quad}.1], for each reflection $\rfl(\alpha_p,\cdot), p=1,\ldots, n$, any choice of three elements in $\{\{\alpha_{p-1},\alpha_{p+1}\},\ldots,\{\alpha_{p-(n-1)/2},\alpha_{p+(n-1)/2}\}\}$ gives rise to two quadral points and, summing up, we obtain exactly $2n\binom{(n-1)/2}{3}$ quadral points.
\item[\ref{eq:quint}.1] By Proposition \ref{thm;quint}, for each reflection $\rfl(\alpha_p,\cdot),p=1,\ldots,n/2$, choosing three elements from $\{\{\alpha_{p-1},\alpha_{p+1}\},\ldots,\allowbreak \{\alpha_{p-\frac{n}{2}+1},\alpha_{p+\frac{n}{2}-1}\}\}$ and one from $\{\alpha_p,\alpha_{p+n/2}\}$ gives rise to four quintuple points. Thus we obtain the number $\frac{n}{2}\binom{n/2-1}{3}\times2\times4$.
\item[\ref{eq:quint}.2] Analogously to [\ref{eq:quint}.1], for each reflection $\rfl(\alpha_p,\cdot), p=1,\ldots,n$, any choice of three elements from $\{\{\alpha_{p-1},\alpha_{p+1}\},\ldots,\allowbreak \{\alpha_{p-(n-1)/2},\alpha_{p+(n-1)/2}\}\}$ gives rise to four quintuple points.
Thus we obtain the number $\frac{n}{2}\binom{(n-1)/2}{3}\times2\times4$.
\end{enumerate}
\end{proof}



\section{The non-very generic intersections in $\B(6,2,\A)$ and $\B(6,3,\A)$}\label{Sec:Per}
\noindent
In the previous Section we proved that the only non-very generic intersections in $\B(6,2,\A)$ are pairs of quadral points which correspond to involutions in $PGL(2,\K)$. It follows that there is a natural correspondence between the non-very generic intersections in $\B(6,2,\A)$ and the permutations of the form $(p_1p_2)(p_3p_4)(p_5p_6)$.\\
Moreover, in \cite{SaSeYa17} authors show that there is a correspondence between the permutation $(p_1p_2)(p_3p_4)(p_5p_6)$ and the non-very generic intersection in $\B(6,3,\A)$ associated to the $3$-set $\T=\{\{p_1,p_2,p_3,p_4\},\{p_3,p_4,p_5,p_6\},\{p_1,p_2,p_5,p_6\}\}$.\\ 
As a consequence of the above remarks, it is possible to classify the non-very generic intersections in $\B(6,2,\A)$ and $\B(6,3,\A)$ by means of the permutation group.
\subsection{Permutation group on six points}\label{SSec:Per}
\begin{figure}
\begin{tabular}{lr}
\begin{minipage}{0.65\linewidth}
\centering
\begin{footnotesize}
\begin{tikzpicture}[scale=1.5, rotate=90]
	\node[inner sep=0.2em, fill=black!100, circle] (a) at (2,0){};
	\node[inner sep=0.2em, fill=black!100, circle] (b) at (1,1.73){};
	\node[inner sep=0.2em, fill=black!100, circle] (c) at (-1,1.73){};
	\node[inner sep=0.2em, fill=black!100, circle] (d) at (-2,0){};
	\node[inner sep=0.2em, fill=black!100, circle] (e) at (-1,-1.73){};
	\node[inner sep=0.2em, fill=black!100, circle] (f) at (1,-1.73){};
	\node at (2,0.5){$\phi(1)$};
	\node at (1.3,1.8){$\phi(2)$};
	\node at (-1.3,1.8){$\phi(3)$};
	\node at (-2,-0.5){$\phi(4)$};
	\node at (-1.3,-1.8){$\phi(5)$};
	\node at (1.3,-1.8){$\phi(6)$};
	
	\draw (b) to node[auto=left] {$\phi((12))=(15)(26)(34)$} (a);
	\draw (c) to node[auto=left, rotate=60] {$(14)(23)(56)$} (a);
	\draw (d) to node[auto=left, rotate=-90, pos=0.15] {$(13)(25)(46)$} (a);
	\draw (a) to node[auto=left, rotate=-60, pos=0.1] {$(12)(36)(45)$} (e);
	\draw (a) to node[auto=left] {$(16)(24)(35)$} (f);
	
	\draw (c) to node[auto=left] {$(12)(35)(46)$} (b);
	\draw (d) to node[auto=left, rotate=-60, pos=0.1] {$(16)(23)(45)$} (b);
	\draw (b) to node[auto=left, rotate=-30, pos=0.15] {$(13)(24)(56)$} (e);
	\draw (b) to node[auto=left, pos=0.3] {$(14)(25)(36)$} (f);
	
	\draw (d) to node[auto=left] {$(15)(24)(36)$} (c);
	\draw (e) to node[auto=left, pos=0.3] {$(16)(25)(34)$} (c);
	\draw (c) to node[auto=left, rotate=30, pos=0.45] {$(13)(26)(45)$} (f);
	
	\draw (e) to node[auto=left] {$(14)(26)(35)$} (d);
	\draw (f) to node[auto=left, rotate=60] {$(12)(34)(56)$} (d);
	
	\draw (f) to node[auto=left] {$(15)(23)(46)$} (e);
\end{tikzpicture}
\end{footnotesize}
\caption{Complete graph $\phi(K_6)$}\label{fig:phiK6}
\end{minipage}&

\begin{minipage}{0.3\linewidth}
\centering
\begin{tikzpicture}[scale=0.6, rotate=90]
	\node[inner sep=0.2em, draw, circle] (a) at (2,0){$1$};
	\node[inner sep=0.2em, draw, circle] (b) at (1,1.73){$2$};
	\node[inner sep=0.2em, draw, circle] (c) at (-1,1.73){$3$};
	\node[inner sep=0.2em, draw, circle] (d) at (-2,0){$4$};
	\node[inner sep=0.2em, draw, circle] (e) at (-1,-1.73){$5$};
	\node[inner sep=0.2em, draw, circle] (f) at (1,-1.73){$6$};
	
	\draw[dash dot] (b) to (a);
	\draw[dashed] (c) to (a);
	\draw[double distance=0.8pt] (d) to (a);
	\draw (a) to (e);
	\draw[line width=0.8pt, dotted] (a) to (f);
	
	\draw[double distance=0.8pt] (c) to (b);
	\draw[line width=0.8pt, dotted] (d) to (b);
	\draw[dashed] (b) to (e);
	\draw (b) to (f);
	
	\draw (d) to (c);
	\draw[line width=0.8pt, dotted] (e) to (c);
	\draw[dash dot] (c) to (f);
	
	\draw[dash dot] (e) to (d);
	\draw[dashed] (f) to (d);
	
	\draw[double distance=0.8pt] (f) to (e);
\end{tikzpicture}
\begin{tikzpicture}
\draw (0,2)--(1,2)node[right]{$(15)(26)(34)$};
\draw[double distance=0.8pt] (0,1.5)--(1,1.5)node[right]{$(14)(25)(36)$};
\draw[dashed] (0,1)--(1,1)node[right]{$(13)(25)(46)$};
\draw[dash dot] (0,0.5)--(1,0.5)node[right]{$(12)(36)(45)$};
\draw[line width=0.8pt, dotted] (0,0)--(1,0)node[right]{$(16)(24)(35)$};
\end{tikzpicture}
\caption{$1$-factorization on $\phi(1)$}\label{fig:Parti}
\end{minipage}
\end{tabular}
\end{figure}
\noindent
Let $S_6$ be the symmetric group of degree $6$ and $\phi$ be the outer automorphism defined by:
\begin{alignat*}{7}
(12)&\mapsto&\phi((12))=(15)(26)(34),&\quad&
(23)&\mapsto&\phi((23))=(12)(35)(46),\\
(34)&\mapsto&\phi((34))=(15)(24)(36),&\quad&
(45)&\mapsto&\phi((45))=(14)(26)(35),\\
(56)&\mapsto&\phi((56))=(15)(23)(46).
\end{alignat*}
Notice that $\phi$ is unique up to inner automorphisms of $S_6$. \\
\noindent
If $K_6$ is the complete graph on $6$ vertices, it is possible to define the new graph $\phi(K_6)$, depicted in Figure \ref{fig:phiK6}, having vertices $\phi(i)$'s and edges $\phi(i)\phi(j)=\phi((ij))$. As a consequence of the fact that $\phi$ is an automorphisms, we get that  the subgroups $\langle E\rangle, \langle E'\rangle$ of the symmetric group $S_6$ associated to any two subsets $E,E'$ of the edge set of $\phi(K_6)$, are equal if and only if the two partitions of the vertices determined by the connected components of $E$ and $E'$ are the same. That is the partitions of the vertices of $\phi(K_6)$ determine the subgroups of $S_6$.\\
There are exactly the following eleven types of partitions of six:
\begin{equation*}
1^6, 1^4 2^1, 1^3 3^1, 1^2 2^2, 1^2 4^1, 1^1 2^1 3^1, 2^3, 1^1 5^1, 2^1 4^1, 3^2, 6^1
\end{equation*}
which define, up to connected components, eleven types of subgraphs of $\phi(K_6)$. In particular a partition $\V_{\nu}$ of type $\nu=i_1^{n_1} i_2^{n_2}\cdots i_l^{n_l}$ will correspond to a subgraph $\phi(K_6)[\V_{\nu}]$ of $\phi(K_6)$ having as connected components exactly $n_j$ copies of $K_{i_j},j=1,\ldots,l$.
The number $m(\nu)$ of the edges in $\phi(K_6)[\V_{\nu}]$ equals $\sum_{1\leq j\leq l} n_j\binom{i_j}{2}$, that is:
\begin{equation}
\begin{alignedat}{8}\label{eq:mformula}
m(1^6)&=0, &\quad m(1^4 2^1)&=1, &\quad m(1^3 3^1)&=3, &\quad m(1^2 2^2)&=2, \\
m(1^2 4^1)&=6, &\quad m(1^1 2^1 3^1)&=4, &\quad m(2^3)&=3, &\quad m(1^1 5^1)&=10, \\
m(2^1 4^1)&=7, &\quad m(3^2)&=6, &\quad m(6^1)&=15.
\end{alignedat}
\end{equation}
Since the edges of $\phi(K_6)$ correspond to elements $\sigma \in S_6$ which are the product of exactly three transpositions and each $\sigma$, in turns, corresponds to a non very generic intersections in $\B(6,2,\A)$ and $\B(6,3,\A)$, we can now count and, partially, classify, the non very generic intersections in $\B(6,2,\A)$ and $\B(6,3,\A)$.

\subsection{The arrangement $\B(6,2,\A)$}
\noindent
Let $\A$ be a generic arrangement of $6$ lines in $\K^2$ and denote by $\mathcal{Q}_4(\B(6,2,\A))$ the set of quadral points of $\B(6,2,\A)$ and by $m_4(\A)$ its cardinality. With the above notations, the discussion in Section \ref{SSec:Per} provides a proof of the following theorem.

\begin{thm}\label{thm:typeB62}
There is one and only one type $\nu$ partition $\V_{\nu}$ of the vertex set of $\phi(K_6)$ such that the edge set of $\phi(K_6)[\V_{\nu}]$ corresponds to a fixed $4$-set associated to $\mathcal{Q}_4(\B(6,2,\A))$.
In particular $m_4(\A)=2m(\nu)$.
\end{thm}

\noindent The above Theorem \ref{thm:typeB62} allows to classify the non-very generic arrangements of $6$ lines in $\K^2$. For example, the arrangement $\A_O$ in the Example \ref{exam;Oct} corresponds to type $1^2 4^1$. Notice, however, that not all partitions give rise to a non-very generic arrangement. Indeed the existence of an arrangement corresponding to a given partition of $[6]={1,\cdots,6}$ depends on the coefficient field $\K$.\\ 
The following proposition easily follows from the properties of the projective transformation group.

\begin{prop}
There are not line arrangements corresponding to a type $6^1$ partition of the vertex set of $\phi(K_6)$. In particular, $m_4(\A)\leq 2m(1^1 5^1)=20$ for any arrangement $\A$.
\end{prop}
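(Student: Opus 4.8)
The plan is to translate the notion of a type $6^1$ partition into a statement about the projective symmetry group of the point configuration $\A_\infty$, and then to contradict the classification of finite subgroups of $PGL(2,\K)$. The only genuinely nontrivial input is that $S_6$ is not such a subgroup; everything else is bookkeeping.

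First I would set up the relevant group. Let $G=\mathrm{Stab}_{PGL(2,\K)}(\A_\infty)$ be the setwise stabilizer of the six points of $\A_\infty\subset\PP(\K^2)$. Since any element of $PGL(2,\K)$ fixing three or more points of the projective line is the identity, the action of $G$ on the six points is faithful, giving an embedding $G\hookrightarrow S_6$; write $\overline{G}$ for its image. By Lemma \ref{thm;Inv} and the discussion preceding Theorem \ref{thm:typeB62}, every non-very generic point of $\B(6,2,\A)$ arises from a fixed-point-free involution $f\in G$, whose image in $S_6$ is a fixed-point-free involution of type $2^3$, i.e.\ an edge of $\phi(K_6)$. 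Hence if $E$ denotes the set of realized edges, then $E\subseteq\overline{G}$, and therefore $\langle E\rangle\leq\overline{G}\leq S_6$.

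Next I would translate the hypothesis. If $\A$ corresponded to a type $6^1$ partition $\V$, then by definition the edges of $\phi(K_6)[\V]$ connect all six vertices; pulling back through the automorphism $\phi$, the underlying transpositions form a connected graph on $[6]$ and hence generate $S_6$, so that $\langle E\rangle=\phi(S_6)=S_6$. Combined with $\langle E\rangle\leq\overline{G}\leq S_6$ this forces $\overline{G}=S_6$, and consequently $G\cong S_6$ as a subgroup of $PGL(2,\K)$. The main obstacle is then to rule this out: because $G$ embeds in $S_6$ it is finite, so $G$ is a \emph{finite} subgroup of $PGL(2,\K)$. Over a field of characteristic $0$ the finite subgroups of $PGL(2,\K)$ are, up to conjugacy, cyclic, dihedral, or one of $A_4,S_4,A_5$, none of which is isomorphic to $S_6$ (for instance by comparing orders, since $|S_6|=720$). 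This contradiction shows that no arrangement corresponds to a type $6^1$ partition. The conclusion persists in arbitrary characteristic, since the only finite subgroup of $PGL(2)$ of order $720$ is $PGL(2,9)$, which is not isomorphic to $S_6$.

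Finally I would deduce the numerical bound. By Theorem \ref{thm:typeB62} we have $m(\A)=2m(\nu)$, where $\nu$ is the type of the partition attached to $\A$. Since $\nu\neq 6^1$, inspection of \eqref{eq:mformula} shows that $m(\nu)$ is maximized among the remaining types by $m(1^1 5^1)=10$; hence $m(\A)\leq 2m(1^1 5^1)=20$.
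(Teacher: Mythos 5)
Your proof is correct in substance and shares the paper's skeleton---the setwise stabilizer $G\leq PGL(2,\K)$ acts faithfully on the six points, the realized edges lie in its image $\overline{G}\leq S_6$, and a type $6^1$ partition forces $\overline{G}=S_6$---but it closes the argument by a genuinely different route. The paper finishes in one elementary line: since $S_6$ contains a transposition, some element of $PGL(2,\K)$ would have to exchange two of the six points while fixing the other four, and a projective transformation of the line fixing three points is the identity; contradiction. You instead invoke the classification of finite subgroups of $PGL(2,\K)$ (cyclic, dihedral, $A_4$, $S_4$, $A_5$ in characteristic $0$) and compare orders. Both work, but the paper's argument is lighter---it reuses only the three-fixed-point rigidity you already needed for faithfulness, applied to a single group element rather than to the whole group---and it is uniform in the field: it rules out type $6^1$ over every commutative field at once, which matters because the paper's standing assumption on $\K$ is only commutativity, and the surrounding discussion (e.g.\ the $\mathbb{F}_5$ example of type $1^1 5^1$) shows positive characteristic is in scope.

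One caveat on your positive-characteristic patch: the sentence ``the only finite subgroup of $PGL(2)$ of order $720$ is $PGL(2,9)$'' is false as stated, since for instance cyclic (and dihedral) subgroups of order $720$ exist in $PGL(2,\overline{\mathbb{F}_p})$ whenever $p\nmid 720$. What Dickson's theorem actually gives is that every non-solvable finite subgroup is $A_5$, $PSL(2,p^m)$ or $PGL(2,p^m)$; since $S_6$ is non-solvable and the only group on this list of order $720$ is $PGL(2,9)\not\cong S_6$, your conclusion does follow, but the step must be phrased this way to be valid. Your final numerical deduction ($m(\A)=2m(\nu)$ with $\nu\neq 6^1$, and $m(1^1 5^1)=10$ maximal among the remaining values in \eqref{eq:mformula}) is correct and agrees with the paper.
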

\begin{proof}
If an arrangement $\A$ corresponding to a partition of type $6^1$ exists, then $S_6$ acts faithfully on $\A_\infty$.
In particular, there is a permutation exchanging only two points but fixing the other four points in the $PGL(2,\K)$.
But a projective transformation that fixes three points in the $PGL(2,\K)$ is the identity map and this is is an absurd.
\end{proof}
\noindent
The above proposition provides a sharp upper bound for $m_4(\A)$. For example, $m_4(\A)$ is exactly $20$ when $\A$ is the unique arrangement of type $1^1 5^1$ over the five elements finite field $\mathbb{F}_5$.
Its normals are the colmuns of $\begin{pmatrix}
    1&0&1&2&3&4\\0&1&1&1&1&1
\end{pmatrix}$.

\subsection{The arrangement $\B(6,3,\A)$}
\noindent
In this subsection, $\A$ is a generic arrangement of $6$ planes in $\K^3$, $\mathcal{Q}_3(\B(6,3,\A))$ the set of non very generic intersections of $\B(6,3,\A)$ and $m_3(\A)$ its cardinality. Analogously to the planar case, the following Theorem holds.
\begin{thm}\label{thm:ParmB63}
There is one and only one type $\nu$ partition $\V_{\nu}$ of the vertex set of $\phi(K_6)$ such that the edge set of $\phi(K_6)[\V_{\nu}]$ corresponds to a fixed $3$-set associated to $\mathcal{Q}_3(\B(6,3,\A))$.
In particular $m_3(\A)=m(\nu)$.
\end{thm}
\noindent
A consequence of our approach is the extension to any field of the alternative proof of the Pappus's Hexagon Theorem over the complex field provided by Sawada, Yamagata, and the second author in \cite{SaSeYa17} . Analogously to the following one, their proof is based on the combinatorics of the discriminantal arrangement.
\begin{thm}[Pappus]
Let $\T_1,\T_2$ be two disjoint $3$-sets associated to two non-very generic intersections in $\B(6,3,\A)$ and let  $\sigma_1$ and $\sigma_2$ be edges of $\phi(K_6)$ corresponding to $\T_1$ and $\T_2$ respectively. Then the $3$-set $\T_3$ corresponding to the edge $\sigma_3=\sigma_1\sigma_2\sigma_1$ gives rise to a third non-very generic intersection (see Figure \ref{fig:Pappus}).
\begin{figure}
\begin{tikzpicture}
\draw[dashed](-3,1)--(2.5,1)node[pos=-0.6]{$\sigma_1:\{\{p_1,p_2\},\{p_3,p_4\},\{p_5,p_6\}\}$};
\draw[dashed](-3,0)--(2.5,0)node[pos=-0.6]{$\sigma_3:\{\{p_1,p_4\},\{p_2,p_5\},\{p_3,p_6\}\}$};
\draw[dashed](-3,-1)--(2.5,-1)node[pos=-0.6]{$\sigma_2:\{\{p_1,p_6\},\{p_2,p_3\},\{p_4,p_5\}\}$};
\draw (-2.4,1.2)--(2.4,-1.2);
\draw (-2.2,1.2)--(0.2,-1.2);
\draw (2.2,-1.2)--(-0.2,1.2);
\draw (-2.2,-1.2)--(0.2,1.2);
\draw (2.2,1.2)--(-0.2,-1.2);
\draw (2.4,1.2)--(-2.4,-1.2);
\node at (-2.9,1.3){$H_{p_2,\infty}$};
\node at (-2.1,1.5){$H_{p_1,\infty}$};
\node at (-0.5,1.5){$H_{p_3,\infty}$};
\node at (0.5,1.5){$H_{p_4,\infty}$};
\node at (2.0,1.5){$H_{p_6,\infty}$};
\node at (2.9,1.3){$H_{p_5,\infty}$};
\end{tikzpicture}
\caption{Pappus' configuration}\label{fig:Pappus}
\end{figure}
\end{thm}
\subsection{A complete classification of $\B(6,3,\A)$ over a commutative field of characteristic $0$}\label{Sec:Class}

\begin{thm}\label{thm:Class}
Let $\K$ be a subfield of the complex field $\C$ and let $\A$ be an arrangement of $6$ planes in $\K^3$.
The minimal fields in which the type $\nu$ arrangement appears are provided in the following table:
\begin{equation*}
	\begin{array}{l||c|c|c|c|c|c|c|c|c|c|c}
	\nu&1^6&1^4 2^1&1^3 3^1&1^2 2^2&1^2 4^1&1^1 2^1 3^1&2^3&1^1 5^1&2^1 4^1&3^2&6^1 \\ \hline
	\K&\Q&\Q&\Q&\Q&\Q&\Q&\star&\Q(\sqrt{5})&\star&\Q(\sqrt{-3})&\star
	\end{array}
\end{equation*}
Here $\star$ means that the type $\nu$ arrangement does not exist over a characteristic $0$ commutative field.
\end{thm}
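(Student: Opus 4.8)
The plan is to transport the whole question into the permutation picture of Section~\ref{Sec:Per}. By Lemma~\ref{thm:ParmB63} an arrangement has type $\nu$ exactly when its non-very generic good $6$-partitions are the edges of $\phi(K_6)[\V]$ for a vertex partition $\V$ of type $\nu$; writing $\V=\phi(\{I_1,\dots,I_\rho\})$, these are the fixed-point-free involutions $\phi((ij))$ with $i,j$ in a common block, i.e.\ the fixed-point-free involutions lying in $G_\nu:=\phi(S_{I_1}\times\cdots\times S_{I_\rho})\le S_6$. Via \eqref{eq:cross}, such a pairing $(p_1p_2)(p_3p_4)(p_5p_6)$ being satisfied means the three join lines $\overline{P_{p_1}P_{p_2}}$, $\overline{P_{p_3}P_{p_4}}$, $\overline{P_{p_5}P_{p_6}}$ of the points $P_p:=[\alpha_p]\in\PP(\K^3)$ are concurrent. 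So for each $\nu$ I must find the smallest $\K\subseteq\C$ admitting six points of $\PP(\K^3)$ in general position whose concurrent pairings are exactly the fixed-point-free involutions of $G_\nu$. The organizing principle is a conic dichotomy: for six points on a smooth conic the three chords of a pairing are concurrent iff that pairing is induced by a projective involution of the conic, so there the satisfied pairings are precisely the fixed-point-free involutions of the projective stabilizer $G'\le PGL(2,\K)$. Since $G'$ must be cyclic, dihedral, $A_4$, $S_4$ or $A_5$, I expect the trichotomy: types $\nu$ with an odd block and $G_\nu\le PGL(2,\C)$ give the $\Q$-entries (realized on a conic), types with an odd block but $G_\nu\not\le PGL(2,\C)$ give the two quadratic entries, and types with all blocks even give the $\star$-entries.

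For the conic column $\{1^6,1^42^1,1^33^1,1^22^2,1^24^1,1^12^13^1\}$, with $G_\nu$ one of $1,\Z_2,S_3,\Z_2\times\Z_2,S_4,\Z_2\times S_3$, I would realize each over $\Q$. All of these sit in $PGL(2,\Q)$ (the anharmonic $S_3$ on $\{0,1,\infty\}$ and its sub- and supergroups, including the rational octahedral $S_4$), so I take a conic $\cong\PP^1_\Q$ together with a $G_\nu$-stable set of six $\Q$-points chosen generically in its orbit family; then the projective stabilizer is exactly $G_\nu$, the within-block edges are concurrent, no cross-block pairing is a symmetry hence none is spuriously concurrent, and the six points stay in general position because a line meets the conic at most twice. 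The only delicate point is that the most symmetric coordinates may involve $\sqrt{-1}$ (as for the octahedral configuration of Example~\ref{exam;Oct}); but complex conjugation carries such a configuration to a projectively equivalent one, so its field of moduli is $\Q$ and it descends to a genuinely rational model. As $\Q$ is the prime field, this is optimal.

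For the two quadratic types I would exhibit explicit models and then force the field by rigidity. Type $1^15^1$ is realized by the dodecahedral arrangement of Section~\ref{Sec:Dodeca}, whose ten dependencies are exactly the within-block edges of a $1^15^1$ partition and which is defined over $\Q(\sqrt5)$; type $3^2$ I would realize by an Eisenstein (Heisenberg) configuration carrying the order-$18$ symmetry $(\Z_3\times\Z_3)\rtimes\Z_2$, defined over $\Q(\sqrt{-3})$. Because $S_5,S_3\times S_3\not\le PGL(2,\C)$, neither configuration lies on a conic. For minimality I would show each stratum in the $4$-dimensional moduli of six points is zero-dimensional --- the ten (resp.\ six) concurrencies admit no type-preserving deformation --- and equals the $PGL(3)$-orbit of a single conjugate pair $\{C,\bar C\}$. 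Rigidity then forces every symmetry of the coloured pattern to be induced by $PGL(3,\K)$, so the relevant rotation group ($\phi(A_5)\cong A_5$, resp.\ the order-$18$ Heisenberg group) embeds in $PGL(3,\K)$; as the two three-dimensional irreducibles of $A_5$ are Galois-conjugate over $\Q(\sqrt5)$ and the Heisenberg representation is defined over $\Q(\sqrt{-3})$ but not $\Q$, the configurations $C,\bar C$ are projectively inequivalent and no rational configuration can lie in the stratum. This pins the minimal fields to $\Q(\sqrt5)$ and $\Q(\sqrt{-3})$.

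For the $\star$ types $2^3,2^14^1,6^1$ --- exactly those whose blocks are all even --- the within-block edges contain a perfect matching of the six vertices. By the synthemes combinatorics of Section~\ref{Sec:Per} (images under $o$ of non-adjacent edges share a $2$-cycle), the three fixed-point-free involutions of such a matching share one common transposition $(p\,q)$; writing the other four indices as $\{a,b,c,d\}$, the three concurrencies they encode force the diagonal points $\overline{P_aP_c}\cap\overline{P_bP_d}$, $\overline{P_aP_b}\cap\overline{P_cP_d}$, $\overline{P_aP_d}\cap\overline{P_bP_c}$ of the complete quadrangle $P_aP_bP_cP_d$ all onto the line $\overline{P_pP_q}$. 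Their collinearity is a Fano configuration, impossible in characteristic $\ne 2$, so these three types are unrealizable over every characteristic-$0$ field (for $6^1$ this sharpens the $S_6\not\le PGL(2)$ obstruction of Section~\ref{Sec:Per}). The main obstacle is thus the quadratic column: establishing that the $1^15^1$ and $3^2$ strata are rigid --- that \eqref{eq:cross} cuts the $4$-dimensional moduli down to a single conjugate pair with no stray rational component --- and carrying out the field-of-moduli computation that converts the representation theory of $A_5$ and of the Heisenberg group into the precise fields $\Q(\sqrt5)$ and $\Q(\sqrt{-3})$; this step requires genuine elimination with the determinantal conditions rather than soft group theory.
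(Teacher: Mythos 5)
Your starred-column argument is correct and genuinely nicer than the paper's: the three synthemes coming from a perfect matching of the vertices of $\phi(K_6)$ do share a common transposition (each commutes with their product, which is a transposition $(pq)$, and a fixed-point-free involution commuting with $(pq)$ must contain it), so the three concurrencies force the diagonal points of a complete quadrangle onto the line $\overline{P_pP_q}$, which is the Fano configuration and impossible in characteristic $\neq 2$; the paper instead reaches $x=y=0$ by direct computation and uses the same substructure remark for $2^14^1$ and $6^1$. However, the conic dichotomy on which you base the entire $\Q$-column contains a fatal error: there is no ``rational octahedral $S_4$''. The finite subgroups of $PGL(2,\R)$, hence of $PGL(2,\Q)$, are only cyclic and dihedral, so $S_4\not\le PGL(2,\Q)$. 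Worse, type $1^24^1$ can \emph{never} be realized by six rational points on a conic: six rational points force the conic to have a rational point, so it is isomorphic to $\PP^1_\Q$, and your own dichotomy would then produce six involutions in $PGL(2,\Q)$ generating a copy of $S_4$ --- a contradiction. The same contradiction shows your proposed repair (field-of-moduli descent of the octahedral configuration of Example~\ref{exam;Oct}) cannot work, since any rational model projectively equivalent to it would again lie on a rational conic. The paper's realization of $1^24^1$ over $\Q$ exists precisely because off the conic a concurrency pattern does not force a projective symmetry: normalizing $\alpha_1,\ldots,\alpha_4$ to the standard frame, the three generating concurrencies reduce to $y=x$, $z=x^2$, $w(1-x^2)=x-x^2$, a rational one-parameter family (e.g.\ $x=2$, $w=2/3$), and such solutions lie on no conic. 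Restricting to conic models is thus a genuine loss of generality, and it breaks your argument exactly at the one $\Q$-entry that needs work.

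The second gap is the one you concede yourself: for the two quadratic columns you establish existence (the dodecahedral configuration of Section~\ref{Sec:Dodeca} for $1^15^1$, a Hesse-type configuration for $3^2$) but minimality rests on unproved rigidity claims --- that the strata are zero-dimensional, consist of a single conjugate pair of orbits, and that representation theory of $A_5$ and the Heisenberg group then pins the field. The paper replaces all of this with a short elimination that simultaneously yields existence and minimality: in the standard frame the determinantal equations for $1^15^1$ collapse to $x(x-1)(x^2+x-1)=0$ and those for $3^2$ to $x(x-1)(x^2-x+1)=0$, and since any realization over any field $\K$ can be put in this frame, $\K$ must contain a root of the irreducible quadratic factor, i.e.\ $\Q(\sqrt5)$ resp.\ $\Q(\sqrt{-3})$. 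As it stands your proposal proves the three $\star$ entries and (after discarding the conic framework, which still suffices for the cyclic and dihedral types) several $\Q$ entries, but not the $1^24^1$ entry nor the minimality of the two quadratic entries, so it does not prove the theorem.
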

\begin{proof}
The Falk's example (Example 3.2. \cite{Fa94}) shows that the type $1^1 2^1 3^1$ arrangement appears over $\Q$.
Since the condition corresponding to each edge of $\phi(K_6)$ is a solution set of some algebraic equation, the one corresponding to the refinement of $1^1 2^1 3^1$ can be realized on $\Q$ by a proper perturbation.
Therefore the types $1^6,1^42^1,1^33^1,1^22^2,1^12^13^1$ appear in $\Q$.\\
In the rest of the proof we set the vectors normal to the planes in $\A$ as follows: 
\begin{equation*}
\left(\alpha_1\alpha_2\alpha_3\alpha_4\alpha_5\alpha_6\right)=
\begin{pmatrix}
1&0&0&1&w&y\\
0&1&0&1&x&z\\
0&0&1&1&1&1\\
\end{pmatrix}.
\end{equation*}
The condition that $\A$ is generic implies that:
\begin{equation}\label{eq:condition}
w,x,y,z\not\in\{0,1\},\quad w-x,w-y,x-z,y-z,wz-xy,w-x-y+z-wz+xy\neq 0.
\end{equation}
Let $\alpha_p\times\alpha_{p'}$ be the cross product of $\alpha_p$ and $\alpha_{p'}$, we will classify each case by means of the Lemma \ref{thm:ParmB63} and the equation
\begin{equation}\label{eq:cross}
\det(\alpha_{p_1}\times\alpha_{p_2},\alpha_{p_3}\times\alpha_{p_4},\alpha_{p_5}\times\alpha_{p_6})=0
\end{equation}
 which is equivalent to the fact that the intersection $\bigcap_{L \in \T} D_{L}, \T=\{\{p_1,p_2,p_3,p_4\},\allowbreak \{p_3,p_4, p_5,p_6\}, \{p_1,p_2,p_5,p_6\}\}$ is a non-very generic intersection in $\B(3,6,\A)$ (see \cite{SaSeYa17}). \\
For simplicity, we write $\det((p_1p_2)(p_3p_4)(p_5p_6))$ instead of $\det(\alpha_{p_1}\times\alpha_{p_2},\alpha_{p_3}\times\alpha_{p_4},\alpha_{p_5}\times\alpha_{p_6})$.\\
\noindent
\textbf{Type $1^2 4^1,\,1^1 5^1$ :}
Let $\V_{1^2 4^1}=\{\{\phi(1),\phi(2),\phi(3),\phi(4)\},\allowbreak \{\phi(5)\},\{\phi(6)\}\}$ be a partition of type $1^2 4^1$. With this choice the edges of $\phi(K_6)[1^2 4^1]$ are determined by $(15)(26)(34),(12)(35)(46)$, and $(15)(24)(36)$.
Then the following equations are satisfied.
\begin{equation*}
\left\{\begin{alignedat}{5}
0=\det((15)(26)(34))&=&\det\Matrix{0}{-1}{x}{1}{0}{-y}{-1}{1}{0}&=&x-y,\\
0=\det((12)(35)(46))&=&\det\Matrix{0}{0}{1}{-x}{w}{0}{1-z}{y-1}{z-y}&=&-xy+x-w+wz,\\
0=\det((15)(24)(36))&=&\det\Matrix{0}{-1}{x}{1}{0}{-1}{-z}{y}{0}&=&-xy+z.\\
\end{alignedat}\right.
\end{equation*}
Organizing the above equations, we get 
\begin{equation}\label{eq:1241}
w(1-x^2)=x-x^2,\,y=x,\,z=x^2
\end{equation}
which has a solution over the rationals.\\
Analogous computations for the type $1^1 5^1$ and the choice of the partition $\V_{1^1 5^1}=\{\{\phi(1),\phi(2),\phi(3),\phi(4),\phi(5)\},\{\phi(6)\}\}$ yield the equation
$$
x^2+x-1
$$
which has solution over the field $\Q(\sqrt{5})$.\\
\noindent
\textbf{Type $3^2$ :}
Analogously to the previous case, if we choose $\V_{3^2}=\{\{\phi(1),\phi(2),\phi(3)\},\allowbreak \{\phi(4),\phi(5),\phi(6)\}\}$, then the edges of $\phi(K_6)[3^2]$ are determined by $(15)(26)(34),\allowbreak(12)(35)(46),(14)(24)(35)$, and $(15)(23)(46)$. By algebraic computations we get the equations
\begin{equation*}
w=x^2,\,y=z,\,z=-x^2+2x,\,x^4-2x^3+2x^2-x=0.
\end{equation*}
The last equation yields $x(x-1)(x^2-x+1)=0$ which, by $x\neq 0,1$ has solution over the field $\Q(\sqrt{-3})$.\\ 
\noindent
\textbf{Type $2^3,\, 2^1 4^1,\, 6^1$ :}
Let $\V_{2^3}=\{\{\phi(1),\phi(2)\},\allowbreak\{\phi(3),\phi(4)\}, \{\phi(5),\phi(6)\}\}$ be a partition of type $2^3$. Then the edges of $\phi(K_6)[2^3]$ are $(15)(26)(34),(15)(24)(36)$, and $(15)(23)(46)$ which yield
\begin{equation*}
\left\{\begin{alignedat}{5}
0=\det((15)(26)(34))&&=&&x-y,\\
0=\det((15)(24)(36))&&=&&xy+z,\\
0=\det((15)(23)(46))&&=&&xy-x+z-y,
\end{alignedat}\right.
\end{equation*}
that is
\begin{equation}\label{eq:nonreal}
x=y,\,x=-y,\,z=-xy \quad .
\end{equation}
The equations in (\ref{eq:nonreal}) are satisfied if and only if $x=y=0$ which are not admissible values by \eqref{eq:condition}.
Moreover, since the types $2^1 4^1$ and $6^1$ have $2^3$ as a substructure, the equations in \eqref{eq:nonreal} have to be satisfied in those cases too. Hence we can conclude that there are not arrangements of type $2^3,\, 2^1 4^1$, and $6^1$ in any characteristic $0$ commutative field.
\end{proof}

\begin{rema}
The induced graph by the partition type $1^6$ has no edges, so it corresponds to the very generic arrangement.\\
The type $3^2$ corresponds to the Hesse configuration mentioned by Sawada, Yamagata, and the second author (\cf Theorem $6.5.$ \cite{SaSeYa19}) and it does not appear over the real field.
So this classification includes their results.\\
Notice that, as a consequence of \eqref{eq:mformula}, two discriminantal arrangements associated to arrangements of $6$ planes in $\C^3$ can have the same number of non-very generic points, but different combinatorics. On the other hand, this is a classification over characteristic $0$ commutative fields
thus we cannot exclude the possibility that there are more types of general position arrangements of $6$ planes in a $3$-dimensional space over a skew field such that the combinatorics of the discriminantal arrangements associated to them are not isomorphic to the ones we described here.
\end{rema}

\begin{rema}
In the case of type $6^1$, the arrangement must satisfy $5$ equations but the essential parameters of any arrangement of $6$ planes in a $3$-dimensional space are $4$ so the type $6^1$ arrangement does not appear over most fields.\\
However the condition \eqref{eq:nonreal} has a solution when the characteristic of the ground field is $2$ so, for example, over the $4$ elements finite field $\mathbb{F}_4=\mathbb{F}_2[\omega]/(\omega^2+\omega+1)$ if we choose the normals as 
\begin{equation*}
(\alpha_1\alpha_2\alpha_3\alpha_4\alpha_5\alpha_6)=
\begin{pmatrix}
1&0&0&1&\omega&\omega^2\\
0&1&0&1&\omega^2&\omega\\
0&0&1&1&1&1
\end{pmatrix}
\end{equation*}
we get a type $6^1$ arrangement $\A_{6^1}$ with exactly $15$, i.e. the maximum, non-very generic intersections in $\B(6,3,\A)$. Indeed in this case $w=z=\omega,\,x=y=\omega^2$ and all the equations in the proof of Theorem \ref{thm:Class} are satisfied.
If we consider $\B(6,3,\A_{6^1})$ in terms of matroid theory, it includes the uniform matroid $U_{2,4}$ and rank $2$ projective plane $\PP(\mathbb{F}_2^3)$ as minors.
Thus $\B(6,3,\A_{6^1})$ is only representable over characteristic $2$ fields $\mathbb{F}$ with $\abs{\mathbb{F}}>4$ if the field $\mathbb{F}$ is commutative.
\end{rema}


\section{The minimum intersection lattice of $\B(6,3,\A)$ over the real field}\label{Sec:Dodeca}
\begin{figure}
	\begin{center}
	\includegraphics[keepaspectratio, scale=0.2] {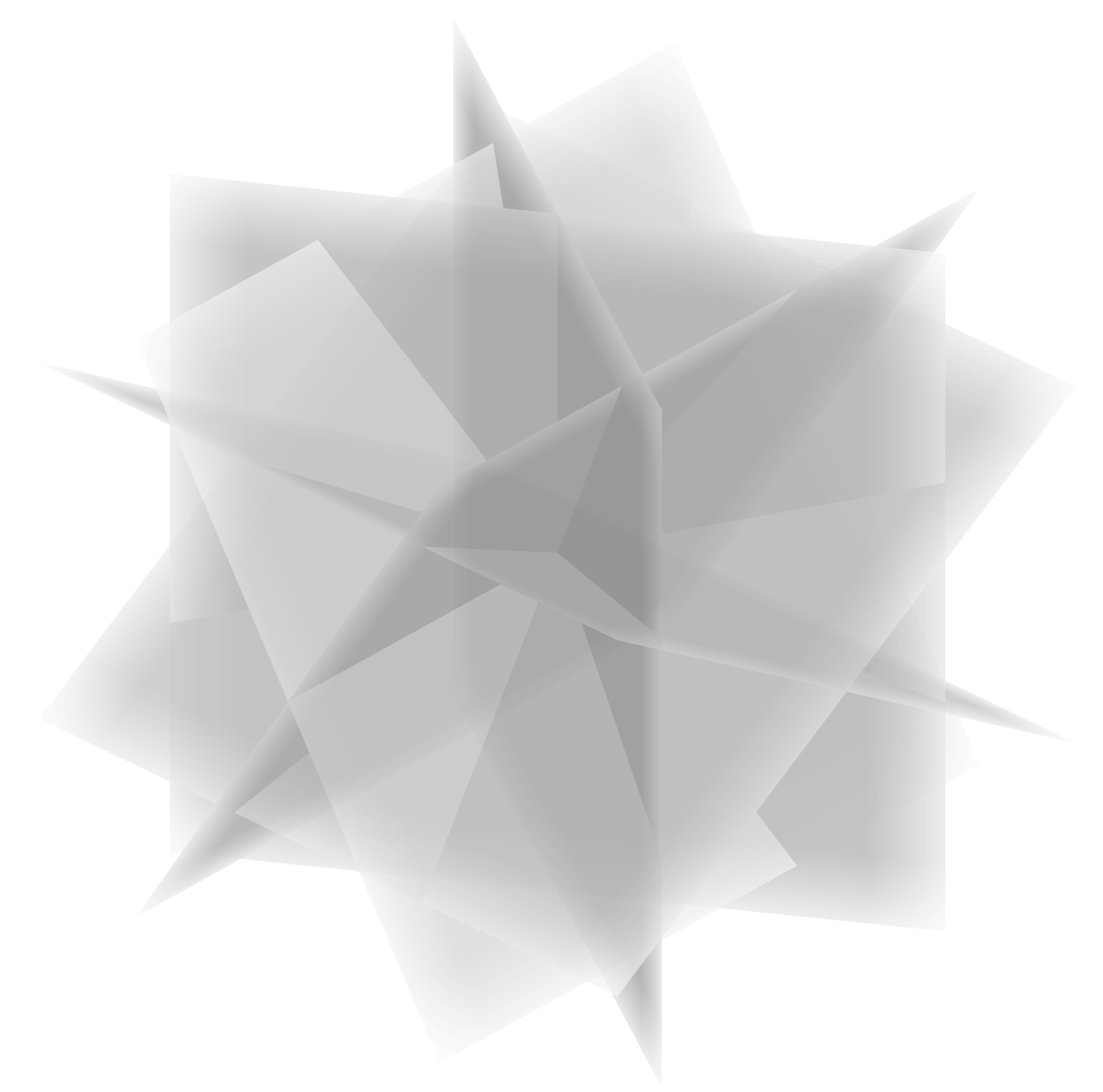}
	\caption{Dodecahedral arrangement}\label{fig;D}
	\end{center}
\end{figure}
\noindent
In Section \ref{Sec:Class} we proved that the maximum number of non-very generic intersections in $\B(6,3,\A)$ over the real (or complex) field is $10$. In this Section we provide an example of such an arrangement: the \textit{dodecahedral arrangement} $\A_{D}=\left\{H_1,H_2,H_3,H_4,H_5,H_6\right\}$ in $\R^3$ defined by the $6$ planes parallel to the faces of the dodecahedron. The normals to the hyperplanes of $\A_D$ are given by
$$\left(\alpha_1 \alpha_2 \alpha_3 \alpha_4 \alpha_5 \alpha_6\right)=
 \begin{pmatrix}
      1 & 1 & 0 & 0 & \tau& -\tau\\
      0 & 0 & \tau & -\tau & 1 & 1\\
      \tau & -\tau & 1 & 1 & 0 & 0\\
   \end{pmatrix}$$
where $\tau$ is the golden ratio $\tau=\frac{1+\sqrt{5}}{2}$. The following proposition holds.

\begin{prop}
The discriminantal arrangement associated to the dodecahedral arrangement has $10$ non-very generic intersections.
\begin{proof}
It is an easy computation to show that the $10$ $3$-sets $\T_i,i=1,\ldots,10$ listed in the table \eqref{tab;allgood3partitions} are associated to hyperplanes $D_L \in \B(6,3,\A_D), L \in \T_i$ which normal vectors satisfy the dependency conditions listed in the equation (\ref{eq:dip}). That is the $10$ intersections $\bigcap_{L \in \T_i} D_L$ are non-very generic.
\begin{equation*}\label{tab;allgood3partitions}
\begin{array}{rccc|rccc}
		&L_1\cap L_2& L_1\cap L_3 & L_2\cap L_3 & 
		&L_1\cap L_2& L_1\cap L_3 & L_2\cap L_3\\ \hline
	\T_1:&12&35&46&
	\T_2:&12&36&45\\
	\T_3:&13&26&45&
	\T_4:&13&24&56\\
	\T_5:&14&23&56&
	\T_6:&14&25&36\\
	\T_7:&15&23&46&
	\T_8:&15&26&34\\
	\T_9:&16&24&35&
	\T_{10}:&16&25&34
\end{array}
\end{equation*}
\begin{equation}\label{eq:dip}
\begin{aligned}
&\T_1:&\alpha_{1235}-\alpha_{1246}-\alpha_{3546}&=&0,&\quad&
&\T_2:&\alpha_{1236}-\alpha_{1245}-\alpha_{3456}&=&0,\\
&\T_3:& \alpha_{1236}+\alpha_{1345}+\alpha_{2456}&=&0,&\quad&
&\T_4:&\alpha_{1356}-\alpha_{1234}-\alpha_{2456}&=&0,\\
&\T_5:&\alpha_{1456}-\alpha_{1234}-\alpha_{2356}&=&0,&\quad&
&\T_6:&\alpha_{1346}-\alpha_{1245}-\alpha_{2356}&=&0,\\
&\T_7:&\alpha_{1456}-\alpha_{1235}-\alpha_{2346}&=&0,&\quad&
&\T_8:&\alpha_{2346}-\alpha_{1256}-\alpha_{1345}&=&0,\\
&\T_9:&\alpha_{1246}-\alpha_{1356}-\alpha_{2345}&=&0,&\quad&
&\T_{10}:&\alpha_{1346}-\alpha_{1256}-\alpha_{2345}&=&0.\\
\end{aligned}
\end{equation}
\end{proof}
\end{prop}
\noindent
This is also a counterexample to the Remark $6.6$ in \cite{SaSeYa19}.

\bibliographystyle{amsplain-nodash}
\bibliography{refarence}

\providecommand{\bysame}{\leavevmode\hbox to3em{\hrulefill}\thinspace}
\providecommand{\MR}{\relax\ifhmode\unskip\space\fi MR }
\providecommand{\MRhref}[2]{%
  \href{http://www.ams.org/mathscinet-getitem?mr=#1}{#2}
}
\providecommand{\href}[2]{#2}
\begin{thebibliography}{10}

\bibitem{Stumal}
Daniele Agostini, Taylor Brysiewicz, Claudia Fevola, Lukas K\"{u}hne, Bernd
  Sturmfels, and Simon Telen, \emph{Likelihood degenerations}, Adv. Math.
  \textbf{414} (2023), Paper No. 108863, 39, With an appendix by Thomas Lam.

\bibitem{Ath99}
Christos~A Athanasiadis, \emph{The largest intersection lattice of a
  discriminantal arrangement}, Beitr. Algebra Geom \textbf{40} (1999), no.~2,
  283--289.

\bibitem{BB97}
Margaret~M Bayer and Keith~A Brandt, \emph{Discriminantal arrangements, fiber
  polytopes and formality}, Journal of Algebraic Combinatorics \textbf{6}
  (1997), no.~3, 229--246.

\bibitem{CFW21}
Beifang Chen, Houshan Fu, and Suijie Wang, \emph{Parallel translates of
  represented matroids}, Adv. in Appl. Math. \textbf{127} (2021), Paper No.
  102176, pp.11.

\bibitem{Cr85}
Henry Crapo, \emph{The combinatorial theory of structures}, Matroid Theory,
  Colloquia mathematica Societatis J{\'a}nos Bolyai, vol.~40, North-Holland,
  1985, pp.~107--213.

\bibitem{DasPaSe21}
Pragnya Das, Elisa Palezzato, and Simona Settepanella, \emph{The generalized
  sylvester's and orchard problems via discriminantal arrangement}, arXiv
  preprint arXiv:2201.03007 (2022).

\bibitem{Fa94}
Michael Falk, \emph{A note on discriminantal arrangements}, Proceedings of the
  American Mathematical Society \textbf{122} (1994), no.~4, 1221--1227.

\bibitem{FeZi01}
Stefan Felsner and G\"unter~M. Ziegler, \emph{Zonotopes associated with higher
  bruhat orders}, Discrete Mathematics \textbf{241} (2001), no.~1-3, 301--312.

\bibitem{KNT12}
Hiroshi Koizumi, Yasuhide Numata, and Akimichi Takemura, \emph{On intersection
  lattices of hyperplane arrangements generated by generic points}, ANNALS OF
  COMBINATORICS \textbf{16} (2012), no.~4, 789--813.

\bibitem{LiSe16}
Anatoly Libgober and Simona Settepanella, \emph{Strata of discriminantal
  arrangements}, Journal of Singularities (Special Volume in honor of E.
  Brieskorn ) \textbf{18} (2018), 441--456.

\bibitem{MS89}
Yu~I Manin and V~Schechtman, \emph{Arrangements of hyperplanes, higher braid
  groups and higher bruhat orders}, Advanced Studies in Pure Mathematics
  \textbf{17} (1989), 289--308, Algebraic Number Theory — in honor of K.
  Iwasawa.

\bibitem{NuTa12}
Yasuhide Numata and Akimichi Takemura, \emph{On computation of the
  characteristic polynomials of the discriminantal arrangements and the
  arrangements generated by generic points}, Harmony of Grobner Bases and the
  Modern Industrial Society, (Takayuki Hibi, editor), World Scientific (2012),
  228--252.

\bibitem{OxWa19}
James Oxley and Suijie Wang, \emph{Dependencies among dependencies in
  matroids}, Electron. J. Combin. \textbf{26} (2019), no.~3, Paper No. 3.46,
  pp.12.

\bibitem{SaSeYa17}
Sumire Sawada, Simona Settepanella, and So~Yamagata, \emph{Discriminantal
  arrangement, {$3\times3$} minors of pl{\"u}cker matrix and hypersurfaces in
  grassmannian {$Gr (3, \mathbf{C}^n)$}}, Comptes Rendus Mathematique
  \textbf{355} (2017), no.~11, 1111--1120.

\bibitem{SaSeYa19}
Sumire Sawada, Simona Settepanella, and So~Yamagata, \emph{Pappus's theorem in
  grassmannian {$Gr (3, \mathbf{C}^n)$}}, ARS MATHEMATICA CONTEMPORANEA
  \textbf{16} (2019), no.~1, 257--276.

\bibitem{SeYa21}
Simona Settepanella and So~Yamagata, \emph{A linear condition for non-very
  generic discriminantal arrangements}, arXiv:2205.04664 (2021).

\bibitem{SSc}
Simona Settepanella and So~Yamagata, \emph{On the non-very generic
  intersections in discriminantal arrangements}, Comptes Rendus Mathematique
  \textbf{360} (2022), 1027--1038.

\end{thebibliography}
\end{document}